\setlist{nolistsep}
\def \PG[#1,#2]{PG(#1,#2)}
\def \AG[#1,#2]{AG(#1,#2)}
\newtheorem{theorem}{Theorem}
\newtheorem{conjecture}[theorem]{Conjecture}
\newtheorem{lemma}[theorem]{Lemma}
\newtheorem{proposition}[theorem]{Proposition}
\newcommand{\RR}{\ensuremath{\mathbb R}}
\newcommand{\pts}{\mathcal P}
\newcommand{\vrts}{\mathcal S}
\newcommand{\surfs}{\mathcal S}
\newcommand{\bi}[1]{{B}(#1)}
\newcommand{\bis}{\mathcal B}
\newcommand{\en}{\mathcal Q}
\newcommand{\eps}{\varepsilon}
\begin{document}

\title{A refined energy bound for distinct perpendicular bisectors}
\author{Ben Lund \thanks{This work was supported by ERC grant 267165 DISCONV}}
\maketitle

\begin{abstract}
Let $\pts$ be a set of $n$ points in the Euclidean plane.
We prove that, for any $\eps > 0$, either a single line or circle contains $n/2$ points of $\pts$, or the number of distinct perpendicular bisectors determined by pairs of points in $\pts$ is $\Omega(n^{52/35 - \eps})$, where the constant implied by the $\Omega$ notation depends on $\eps$.
This is progress toward a conjecture of Lund, Sheffer, and de Zeeuw, that either a single line or circle contains $n/2$ points of $\pts$, or the number of distinct perpendicular bisectors is $\Omega(n^2)$.

The proof relies bounding the size of a carefully selected subset of the quadruples $(a,b,c,d) \in \pts^4$ such that the perpendicular bisector of $a$ and $b$ is the same as the perpendicular bisector of $c$ and $d$.
\end{abstract}

\section{Introduction}

Many classic problems in discrete geometry ask for the minimum number of distinct equivalence classes of subsets of a fixed set of points under some geometrically defined equivalence relation.
The seminal example is the Erd\H{o}s distinct distance problem \cite{erdos1946sets}: How few distinct distances can be determined by a set of $n$ points in the Euclidean plane?
Guth and Katz have nearly resolved the Erd\H{o}s distinct distance question \cite{guth2015erdos}, but many questions of this type remain wide open.

In this paper, we investigate the question: How few distinct perpendicular bisectors can be determined by a set of $n$ points in the Euclidean plane?
Distinct perpendicular bisectors were previously investigated by the author, Sheffer, and de Zeeuw \cite{lund2015bisector}, and a finite field analog was studied by Hanson, the author, and Roche-Newton \cite{hanson2016distinct}.

Without any additional assumption, it is not too hard to give a complete answer to this question.
The vertices of a regular $n$-gon determine $n$ distinct perpendicular bisectors.
Each point of an arbitrary point set $\pts$ of $n$ points determines $n-1$ distinct bisectors with the remaining points of $\pts$, so the only question is whether it is possible that $n$ points determine only $n-1$ bisectors.
A set of two points determines only a single perpendicular bisector, and in subsection \ref{sec:bisectorsAtLeastN}, we give a simple geometric argument showing that $n$ points determine at least $n$ bisectors when $n>2$.

Assume that $\pts$ is a set of $n$ points such that no circle or line contains more than $K$ points of $\pts$.

The author, Sheffer, and de Zeeuw \cite{lund2015bisector} proved the following lower bound on $|\bis|$, the number of distinct bisectors determined by $\pts$.
For any $\eps > 0$,
\begin{equation}\label{eq:weakBound}|\bis| = \Omega\left(\min\left\{K^{-{2}/{5}}n^{{8}/{5}-\eps},
K^{-1} n^2\right\}\right),\end{equation}
where the implied constant depends on $\eps$.
The same paper proposes the following conjecture.
\begin{conjecture}\label{conj:distinctBisectors}
For any $\delta >0$, there is a constant $c>0$ depending on $\delta$ such that either a single line or circle contains $(1 - \delta)n$ points of $P$, or $|\bis| \geq c n^2$.
\end{conjecture}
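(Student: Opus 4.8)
The plan is to reduce the problem to an energy bound and then convert that bound into a lower bound on $|\bis|$ by Cauchy--Schwarz. For a line $\ell$ let $m(\ell)$ denote the number of unordered pairs of $\pts$ whose perpendicular bisector is $\ell$, so that $\sum_\ell m(\ell) = \binom{n}{2}$ and
\begin{equation}
|\bis| \;\geq\; \frac{\binom{n}{2}^2}{\sum_\ell m(\ell)^2}.
\end{equation}
Hence it suffices to prove the energy bound $E := \sum_\ell m(\ell)^2 = O(n^2)$ under the hypothesis that no line or circle carries $(1-\delta)n$ points; this forces $|\bis| = \Omega(n^2)$. The structural observation driving everything is that, up to points fixed by the reflection, $m(\ell) = \tfrac12|\pts \cap \refl_\ell(\pts)|$, so $E$ counts quadruples $(p,q,p',q')$ admitting a common reflection $\refl$ with $\refl(p)=q$ and $\refl(p')=q'$ --- equivalently, four points of $\pts$ forming an isosceles trapezoid symmetric across $\ell$. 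The conjecture thus reduces to showing that a set with no $(1-\delta)n$ points on one line or circle spans only $O(n^2)$ such symmetric quadruples, whereas the trivial bound $m(\ell)\le n$ only gives $E\le n^3$ and $|\bis|=\Omega(n)$.

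Next I would separate the bisectors by richness, the main task being to bound $N_{\geq r} := |\{\ell : m(\ell)\geq r\}|$. A reflection with $m(\ell)\geq r$ is a near-symmetry of $\pts$, so I would parametrise reflections by $(\theta,t)$, with $\theta$ the direction of the axis and $t$ its signed distance from the origin, and set up an incidence problem in this two-dimensional parameter space. Using that each $x\in\pts$ lying on $\ell$ is equidistant from the endpoints of \emph{every} pair with bisector $\ell$ ties the incidences between $\pts$ and the distinct bisector lines to the distance structure of $\pts$; an application of the Szemer\'edi--Trotter bound to these lines, combined with known pinned-distance estimates, should control $N_{\geq r}$ at each dyadic scale, after which a dyadic summation $E \approx \sum_{r=2^j} r^2\, N_{\geq r}$ produces an energy bound. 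This is precisely where the non-degeneracy hypothesis must enter: a single rich line or circle is exactly what lets $N_{\geq r}$ be large for large $r$, so its exclusion is what rules out a global symmetry.

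The new ingredient I would exploit is a direction decomposition of the refined, low-multiplicity pairs that survive the incidence step. Grouping pairs by the direction $\theta$ of $q-p$, one computes that within a fixed direction the bisector is determined \emph{only} by the projected sum $\pi_\theta(p)+\pi_\theta(q)$ of the endpoints onto $u_\theta$; consequently the energy contributed by direction $\theta$ is exactly the additive energy of the projection $\pi_\theta(\pts)$ restricted to the pairs lying in that direction. Once the incidence bound has discarded the concentrated pairs, I would apply a Szemer\'edi--Trotter-based additive-energy estimate, summed over directions, to this refined subset. Balancing the incidence bound against this energy bound is what yields a power saving over $E\le n^3$, and optimising the exchange of the two exponents is the source of the quantitative gain of the shape $|\bis|=\Omega(n^{52/35-\eps})$.

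The hard part will be pushing the energy all the way down to the sharp $O(n^2)$ demanded by the conjecture. The incidence-plus-energy scheme loses in the regime of medium-rich reflections --- those with $m(\ell)$ polynomially between $1$ and $n$ --- where neither Szemer\'edi--Trotter nor the additive-energy estimate is individually tight, and where the angular (multiplicative) structure of directions and the additive structure of projections must be controlled simultaneously. This is exactly the interaction that the Guth--Katz flecnode machinery resolves for the three-parameter group of rigid motions in the distinct-distance problem; but reflections form only a two-parameter family, to which the ruled-surface argument does not transfer cleanly. I therefore expect the scheme above to attain only a partial exponent, and that closing the gap to $n^2$ will require a genuinely new estimate ruling out intermediate-scale symmetry that does not originate from a single line or circle.
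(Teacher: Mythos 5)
There is a genuine gap, and it starts with what the statement actually is: Conjecture \ref{conj:distinctBisectors} is an \emph{open conjecture} that the paper does not prove --- the paper only establishes the much weaker partial bound $|\bis| = \Omega(n^{52/35-\eps})$ of Theorem \ref{thm:distinctBisectors}. Your proposal does not close that gap either, as you yourself concede in your final paragraph. More concretely, your opening reduction targets a false statement. You claim it suffices to show that the full bisector energy $E = \sum_\ell m(\ell)^2$ is $O(n^2)$ whenever no line or circle contains $(1-\delta)n$ points. But take $n/2$ points forming a regular $(n/2)$-gon on a circle and $n/2$ generic points elsewhere: no line or circle contains more than $n/2$ points, yet the $(n/2)$-gon alone determines only about $n/2$ distinct bisectors, each of multiplicity $\Theta(n)$, so $E = \Theta(n^3)$. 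The paper makes exactly this point when it observes that even the conjectured sharp energy bound $|\en| = O(Kn^2)$ (with $K$ the maximum number of cocircular or collinear points) only yields $|\bis| = \Omega(n^2K^{-1})$, which is nowhere near $\Omega(n^2)$ when $K$ is a constant fraction of $n$; this is precisely why the paper says a raw energy argument ``initially seems hopeless'' for the conjecture.

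Your third paragraph does land on the paper's actual key idea for its partial result --- restrict the energy count to the refined set $\Pi$ of pairs not contained in any rich line or circle, show $|\Pi| = \Omega(n^2)$ via a point--circle incidence bound, and bound the refined energy $|\en^*|$ --- but your proposed mechanism for the refined energy bound (parametrizing reflections by $(\theta,t)$, Szemer\'edi--Trotter in that two-parameter space, and additive energy of the projections within each direction class) differs from the paper's, which lifts pairs to bisector surfaces $S_{ac}\subset\RR^2\times\RR^2$ inside the distance hypersurface $H_\delta$ and applies a point--variety incidence theorem with a forbidden-common-neighborhood condition (Theorem \ref{th:incidencebound}). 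You give no argument that the direction decomposition controls the medium-rich regime, and you explicitly flag that it will not; so as a proof of Conjecture \ref{conj:distinctBisectors} the proposal is incomplete at its core, and even as a route to the paper's Theorem \ref{thm:distinctBisectors} it is a sketch whose quantitative steps (the bound on the number of $r$-rich axes, the summed additive-energy estimate, and the final balancing) are not carried out.
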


In this paper, we prove the following.
\begin{theorem}\label{thm:distinctBisectors}
For any $\delta,\eps > 0$, there is a constant $c > 0$ depending on $\delta$ and $\eps$ such that either a single circle or line contains $(1-\delta )n$ points of $\pts$, or
$|\bis| \geq c n^{52/35 - \eps})$.
\end{theorem}
This improves on the earlier result (\ref{eq:weakBound}) of the author, Sheffer, and de Zeeuw in the case that some line or circle contains $\Omega(n^{2/7+\eps})$ points of $\pts$, and gives the first non-trivial result on Conjecture \ref{conj:distinctBisectors} in the case that a single line or circle contains a constant fraction of the points of $\pts$.

The proof (in \cite{lund2015bisector}) of inequality (\ref{eq:weakBound}) uses the, now standard, method of bounding the ``energy"\footnote{The term \textit{additive energy}, referring to the number of quadruples $(a,b,c,d)$ in some underlying set of numbers such that $a+b = c+d$, was coined by Tao and Vu \cite{taoadditive}. Starting with the work of Elekes and Sharir \cite{elekes2011incidences}, and Guth and Katz \cite{guth2015erdos} on the distinct distance problem, the strategy of using geometric incidence bounds to obtain upper bounds on analogously defined energies has become indispensable in the study of questions about the number of distinct equivalent subsets.} of the quantity in question.
Specifically, we write $\bi{a,b}$ for the perpendicular bisector of distinct points $a,b$, and define the \textit{bisector energy} to be the size of the set
$$\en = \{(a,b,c,d) \in \pts^4:a \neq b, c\neq d, \bi{a,b} = \bi{c,d}\}.$$
It is easy to see that $|\en| \leq n^2(n-1)$, since each element of $\en$ is determined by $(a,b,c)$.
Taking $\pts$ to be the vertices of a regular $n$-gon shows that this bound is tight.
In \cite{lund2015bisector}, it is shown that
\begin{equation}\label{eq:energyBound}|\en| = O\left(K^{{2}/{5}}n^{{12}/{5}+\eps} + Kn^2\right),\end{equation}
where $K$ is the largest number of points of $\pts$ contained in any line or circle.
The same paper includes the conjecture that the strongest possible bound is $|\en| = O(Kn^2)$.
By the Cauchy-Schwarz inequality (see, for example, the proof of Lemma \ref{thm:applCS}),
$$|\bis| \geq n^2(n-1)^2/|\en|.$$
From here, a simple substitution gives (\ref{eq:weakBound}).

Following this argument, even a tight bound of $|\en| = O(Kn^2)$ would only give $|\bis| \geq \Omega(n^2K^{-1})$.
This only meets the bound of Conjecture \ref{conj:distinctBisectors} when $K$ is a constant not depending on $n$, and does not give any non-trivial bound for $K = \Omega(n)$.
Hence, it initially seems hopeless to use an energy bound to make substantial progress toward Conjecture \ref{conj:distinctBisectors} in the case that a single line or circle contains many points of $\pts$.

The main new idea in this paper is to apply an energy bound to a refined subset of the pairs of points of $\pts$.
We show that there is a large set $\Pi \subset \pts \times \pts$ of pairs of points, such that
$$\en^* =  \{(a,b,c,d) \in \pts^4: (a,b), (c,d) \in \Pi, \bi{a,b} = \bi{c,d}\}$$
is small.
In particular, we define $\Pi$ to be the set of pairs of points of $\pts$ that are not contained in any circle or line that contains too many points of $\pts$.
We use a point-circle incidence bound, proved in \cite{aronov2002cutting, agarwal2004lenses, marcus2006intersection}, to show that $\Pi$ must be large.
We use a slightly modified version of the argument used to bound $\en$ in \cite{lund2015bisector} to show that $\en^*$ must be small.
An application of the Cauchy-Schwarz inequality then shows that there must be many distinct bisectors determined by pairs of points in $\Pi$, which of course implies that there must be many bisectors in total.

\section{Proofs}

Throughout this section, $\pts$ is a set of $n$ points in the plane, and $\bis$ is the set of distinct perpendicular bisectors determined by the pairs of points of $\pts$.
For any two distinct points $a,b$, we use $B(a,b)$ to denote the perpendicular bisector of $a$ and $b$, and use $|ab|$ to denote the distance between $a$ and $b$.

We rely on the connection between perpendicular bisectors and reflections.
This is that, for any two distinct points $a,b$ in the plane, $b$ is the reflection of $a$ over $B(a,b)$.

\subsection{There are at least $n$ bisectors} \label{sec:bisectorsAtLeastN}
We give the best possible general lower bound on $|\bis|$.
\begin{proposition}
	If $n > 2$, then $|\bis| \geq n.$
\end{proposition}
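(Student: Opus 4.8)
The plan is to first nail down the easy bound $|\bis| \ge n-1$ and then extract one additional bisector by a rigidity argument. For a fixed point $p \in \pts$, the map $q \mapsto \bi{p,q}$ is injective, since the line $\bi{p,q}$ determines $q$ as the mirror image of $p$ across it; hence the $n-1$ bisectors $\bi{p,q}$ with $q \ne p$ are pairwise distinct, giving $|\bis| \ge n-1$. I would then assume, toward a contradiction, that equality $|\bis| = n-1$ holds, and aim to show this forces $n \le 2$.

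The key observation I would exploit is that equality makes every bisector an axis of reflective symmetry of $\pts$. Indeed, for each $p$ the $n-1$ distinct lines $\bi{p,q}$ already exhaust $\bis$, so for every $\ell \in \bis$ there is a $q \in \pts$ with $\bi{p,q} = \ell$, meaning the reflection of $p$ across $\ell$ lies in $\pts$. As this holds for all $p$, each $\ell \in \bis$ reflects $\pts$ onto itself; and since no point lies on its own perpendicular bisector, no point of $\pts$ lies on any $\ell \in \bis$.

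To finish I would split into two cases. If $\pts$ is collinear, the bisectors are exactly the perpendiculars through the distinct pairwise midpoints of the points, and a set of $n$ reals has at least $2n-3$ distinct pairwise midpoints (ordering $x_1 < \cdots < x_n$, the sums $x_1+x_2 < x_1 + x_3 < \cdots < x_1 + x_n < x_2 + x_n < \cdots < x_{n-1}+x_n$ are strictly increasing); since $2n-3 \ge n$ for $n \ge 3$, the claim holds outright. If $\pts$ is not collinear, the reflections across the lines of $\bis$ generate a finite group of isometries of $\pts$, which fixes a common center $O \notin \pts$ and, containing $n-1 \ge 2$ reflections, must be dihedral with some number $t$ of reflection axes, all passing through $O$ and with $t \ge n-1$. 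Non-collinearity forces each such axis to be the bisector of some pair (pick a point off the axis and reflect it), so $t \le n-1$ and hence $t = n-1$; moreover the group acts on $\pts$ with no fixed points, since no point lies on an axis and a nontrivial rotation fixes only $O$. Every orbit then has size $2(n-1)$, so $n$ must be a multiple of $2(n-1)$, which is impossible for $n > 2$.

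The main obstacle I anticipate is the rigidity step: arguing cleanly that exactly $n-1$ bisectors forces $\pts$ to be globally symmetric, and then controlling the resulting symmetry group. Two points need care — separating the collinear case, where the bisectors are parallel and the symmetry-group argument does not apply, and verifying in the non-collinear case that every symmetry axis is genuinely realized as a bisector, which is what pins $t$ down to exactly $n-1$ and yields the divisibility contradiction.
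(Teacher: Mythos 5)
Your proof is correct, but it takes a genuinely different route from the paper's. The paper argues locally and extremally: it fixes the pair $a,b$ at minimal distance and the point $c$ minimizing $\angle abc$, and shows by a short reflection argument that $\bi{b,c}$ cannot equal any $\bi{a,x}$, so the $n-1$ bisectors through $a$ together with $\bi{b,c}$ already give $n$. You instead analyze the equality case of the easy bound $|\bis|\ge n-1$: if equality held, the map $q\mapsto\bi{p,q}$ would be a bijection onto $\bis$ for every $p$, forcing each bisector to be a symmetry axis of $\pts$ avoiding all of $\pts$; in the non-collinear case the symmetry group is then dihedral of order $2(n-1)$ acting freely on $\pts$, and the divisibility $2(n-1)\mid n$ fails for $n>2$, while the collinear case is settled outright by the $2n-3$ distinct pairwise midpoints. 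All the steps check out (the finiteness of the symmetry group uses that a non-collinear finite set admits only finitely many symmetries, and Leonardo's classification of finite planar isometry groups pins the group down as dihedral; both are standard and you invoke them correctly). What your approach buys is structural information: it characterizes what an extremal-looking configuration would have to be and rules it out by orbit counting, and the same rigidity idea could plausibly say more about near-equality cases. What the paper's approach buys is economy: it is entirely elementary, uses no group theory, and produces the extra bisector explicitly rather than by contradiction.
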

\begin{proof}
	Since any point $a \in \pts$ determines $n-1$ distinct bisectors with the remaining points $\pts \setminus \{a\}$, it is sufficient to show that there are three points $a,b,c$ such that $\bi{b,c}$ is distinct from $\bi{a,x}$ for any $x \in \pts$.
	
	Suppose that there is a line $\ell$ containing at least $3$ points of $\pts$.
	Order the points along $\ell$, let the first two points of $\pts \cap \ell$ be $b,c$, and let $a$ be any other point of $\pts \cap \ell$.
	The point $x$ such that $B(a,x) = B(b,c)$ lies in $\ell$ and precedes $b,c$, and hence can't be in $\pts$.
	
	Now suppose that no $3$ points are collinear.
	Let $a,b \in \pts$ so that $|ab|$ is minimal, and let $c \in \pts$ so that the angle $\angle abc$ is minimal.
	If $a$ is on the same side of $\bi{b,c}$ as $c$, then $|ac| \leq |ab|$, which is a contradiction.
	If $a$ is on the line $\bi{b,c}$, then there is no point $x$ such that $\bi{a,x} = \bi{b,c}$, and we have accomplished our goal.
	Hence, we may suppose that $a$ and $b$ are on the same side of $\bi{b,c}$ -- see Figure \ref{fig:Prop4a}.
	
	Let $x$ be the reflection of $a$ over $\bi{b,c}$. Since $x$ is in the interior of the cone defined by $\angle abc$, we have that $\angle abx$ is less than $\angle abc$ - see Figure \ref{fig:Prop4}.
	Since $c$ was chosen so that $\angle abc$ is minimal, $x \notin \pts$, which completes the proof.
\end{proof}

\begin{figure}
	\begin{center}
	\includegraphics{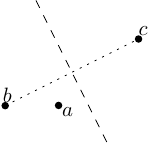}
	\caption{Since $|ab| < |bc|$, both $a$ and $b$ must lie on the same side of $B(b,c)$. }
	\label{fig:Prop4a}
	\end{center}
\end{figure}

\begin{figure}
	\begin{center}
	\includegraphics{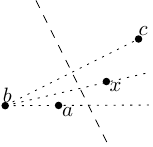}
	\caption{If $B(a,x) = B(b,c)$, and $a$ and $c$ are on opposite sides of $\overline{ax}$, then $\angle abx < \angle abc$. }
	\label{fig:Prop4}
	\end{center}
\end{figure}

\subsection{Proof of Theorem \ref{thm:distinctBisectors}} \label{sec:distinctBisectors}

The proof of Theorem \ref{thm:distinctBisectors} has three main parts.
First, we show that, for any $\eps>0$, if a single circle or line contains at least $\eps n$ points of $\pts$, then $\Omega(n^2)$ distinct bisectors are determined, with the implied constant depending on $\eps$.
Second, we show that, for suitably chosen constants $c_1,c_2 > 0$, if no circle or line contains at least $c_2 n$ points of $\pts$, then there is a set $\Pi$ of $\Omega(n^2)$ pairs of points such that no pair of points in $\Pi$ is contained in any circle that contains more than $M=c_1 n^{2/7} \log^{2/7}n$ points of $\pts$.
Third, we apply an energy argument (as in \cite{lund2015bisector}) to show that, for any $\eps > 0$, there are $O(M^{2/5}n^{12/5+\eps} + M n^2)$ quadruples $(a,b,c,d)$ of points in $\pts$ such that $B(a,b) = B(c,d)$ and $(a,b),(c,d) \in \Pi$, with the implied constant depending on $\eps$.
From there, a straightforward application of the Cauchy-Schwarz inequality finishes the proof.

\noindent{\bf Handling heavy circles.}
First, a geometric lemma.

\begin{lemma}\label{thm:sharedBisectors}
	Let $C$ be a circle or a line, and let $p,q \notin C$ with $p\neq q$.
	Then,
	$$|\{(r,s) \in C \times C : \bi{p,r} = \bi{q,s}\}| \leq 2.$$
\end{lemma}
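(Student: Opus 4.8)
The plan is to reformulate the condition $\bi{p,r}=\bi{q,s}$ in terms of reflections. Writing $\ell$ for this common bisector and $\sigma_\ell$ for the reflection across $\ell$, the statement $\bi{p,r}=\ell$ is equivalent to $\sigma_\ell(p)=r$, and likewise $\bi{q,s}=\ell$ is equivalent to $\sigma_\ell(q)=s$. Hence a pair $(r,s)\in C\times C$ with $\bi{p,r}=\bi{q,s}$ is exactly the data of a reflection $\sigma_\ell$ satisfying $\sigma_\ell(p),\sigma_\ell(q)\in C$: given such a pair, $\ell=\bi{p,r}$ recovers the reflection, and conversely $(r,s)=(\sigma_\ell(p),\sigma_\ell(q))$ recovers the pair. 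This correspondence is a bijection, so it suffices to show there are at most two reflections $\sigma_\ell$ with $\sigma_\ell(p),\sigma_\ell(q)\in C$.

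Next I would use that $\sigma_\ell$ is an involution to rewrite the condition $\sigma_\ell(p),\sigma_\ell(q)\in C$ as $p,q\in\sigma_\ell(C)$. Since a reflection maps a line to a line and a circle to a circle of the same radius, $\sigma_\ell(C)$ is again an object of the same type as $C$, and the task becomes counting the reflections whose image of $C$ passes through both $p$ and $q$. I would then split into the two cases for $C$.

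If $C$ is a circle with center $O$ and radius $\rho$, then $\sigma_\ell(C)$ is the circle of radius $\rho$ centered at $O'=\sigma_\ell(O)$, so $p,q\in\sigma_\ell(C)$ becomes $\|p-O'\|=\|q-O'\|=\rho$. Thus $O'$ lies on $\bi{p,q}$ at distance $\rho$ from $p$, and there are at most two such points $O'$. If $C$ is a line, then $p\neq q$ forces the unique line through both to be $\overline{pq}$, so I need $\sigma_\ell(C)=\overline{pq}$: when $C\parallel\overline{pq}$ the axis must be the parallel midline (one reflection), and when $C$ meets $\overline{pq}$ at a point $X$ the axis must pass through $X$ and bisect the angle between the two lines, giving the two angle bisectors (two reflections). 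In every case the number of admissible image configurations is at most two.

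The main obstacle is the determinacy step: showing that each admissible configuration arises from a \emph{unique} reflection, so that counting configurations actually bounds the number of reflections. This is precisely where the hypotheses $p\neq q$ and $p,q\notin C$ enter. In the circle case, $p\notin C$ guarantees $O'\neq O$ (otherwise $\|p-O\|=\rho$ would place $p$ on $C$), so the axis $\ell=\bi{O,O'}$ is uniquely determined by $O'$ and the map $\sigma_\ell\mapsto O'$ is injective; in the line case, $p\neq q$ is what pins down the single target line $\overline{pq}$. Verifying these injectivity points carefully is the crux, after which the bound of $2$ follows immediately.
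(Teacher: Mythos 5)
Your proof is correct and takes essentially the same approach as the paper: both arguments reflect $C$ and count how many reflected copies of $C$ can pass through both $p$ and $q$ (you parametrize by the axis $\ell$, the paper by the point $r$ with $C_r$ the reflection of $C$ over $\bi{p,r}$, which is the same bijection). Your explicit handling of the line case --- a single target line $\overline{pq}$ but up to two reflections carrying $C$ onto it --- is in fact more careful than the paper's one-line appeal to ``two circles the same size as $C$ through $p$ and $q$,'' where the injectivity claim $C_r=C_{r'}\Rightarrow r=r'$ does not literally hold for lines even though the bound of $2$ survives.
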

\begin{proof}
	If $r,s \in C$ such that $B(p,r) = B(q,s)$,
	then $p,q$ is contained in the reflection of $C$ over $B(p,r)$ - see Figure \ref{fig:lemma5}.
	Since $p\neq q$, there are at most two reflections of $C$ that contain $(p,q)$ if $C$ is a circle, and at most one such reflection of $C$ if $C$ is a line.
	Since we can recover $r,s$ uniquely given one of these reflections, there are at most two choices for the pair $(r,s)$.

\end{proof}

\begin{SCfigure}\label{fig:lemma5}
	\caption{Illustration for Lemma \ref{thm:sharedBisectors}. Both choices of $r,s$ are shown for the given $p,q$ and $C$. The dashed circle is the reflection of $C$ over the dashed line $B(p,r) = B(q,s)$, and the dotted circle is the reflection of $C$ over the dotted line $B(p,r') = B(q,s')$. \vspace{15mm}}
	\includegraphics{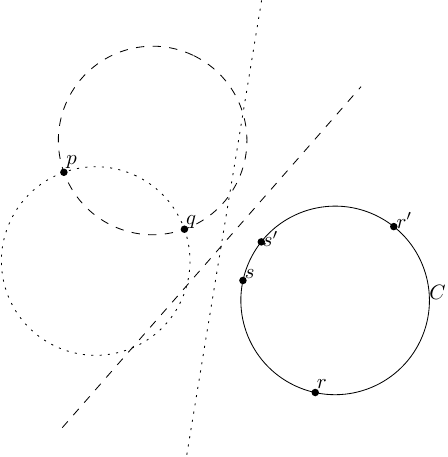}
\end{SCfigure}

Combining Lemma \ref{thm:sharedBisectors} with a combinatorial argument gives the result.

\begin{lemma}\label{thm:heavyCircles}
For any $\eps > 0$, if a single line or circle contains exactly $\eps n$ points of $\pts$, then
$$|\bis| >  \min(\eps/2, 1-\eps) \cdot \eps n^2/2.$$
\end{lemma}

\begin{proof}
Let $C$ be a circle or line that contains $\eps n$ points of $\pts$.
Let $\pts' \subset \pts$ be a set of $k = \min(\eps/2, 1-\eps)n$ points that are not in $C$.
Let $p_1, p_2, \dots p_k$ be an arbitrary ordering of the points of $\pts'$.
Then, by Lemma \ref{thm:sharedBisectors}, $p_i$ determines a set $\bis(p_i)$ of at least $\eps n - 2(i-1)$ distinct perpendicular bisectors with the points of $\pts \cap C$, such that no element of $\bis(p_i)$ is an element of $\bis(p_j)$ for any $j<i$.
Summing over $i$, we have
$$\sum_{i =1}^{k}|\bis(p_i)| \geq \sum_{i =1}^{ k} (\eps n - 2(i-1)) = \eps k n - k^2 + k > \eps n k/2,$$
which proves the lemma.
\end{proof}

We will apply Lemma \ref{thm:heavyCircles} for some $\eps < 2/3$.
In this range, the bound is $|\bis| > \eps^2n^2/4$.

\noindent{\bf Refining the pairs of points.}
Let $c_1, c_2 > 0$, with specific values to be fixed in the proof of Lemma \ref{th:PiExists}.
Let $\Pi \subseteq \pts \times \pts$ be the set of ordered pairs of distinct points of $\pts$ such that no pair in $\Pi$ is contained in a line or a circle that contains more than $c_1 n^{2/7}\log^{2/7}n$ points of $\pts$.
We use a point-circle incidence bound to show that, under the assumption that no circle contains $c_2 n$ points, $|\Pi| = \Omega(n^2)$.

Denote by $s_k$ the number of lines and circles that each contain at least $k$ points of $\pts$, denote by $s_{=k}$ the number of lines and circles that each contain exactly $k$ points of $\pts$.

The strongest known point-circle incidence bound is derived using a combination of the papers by Agarwal, Nevo, Pach, Pinchasi, Sharir, and Smorodinsky \cite{agarwal2004lenses} and by Marcos and Tardos \cite{marcus2006intersection}.
A slightly weaker bound was proved earlier by Aronov and Sharir \cite{aronov2002cutting}.
Combining the strongest known point-circle incidence bound with the point-line incidence bound proved by Szemer\'edi and Trotter \cite{szemeredi1982extremal} gives the following bound on $s_k$.

\begin{lemma}\label{thm:pointCircleBound}
$$s_k = O(n^{3} k^{-11/2} \log n + n^2 k ^{-3} + nk^{-1}).$$
\end{lemma}

Recall that the definition of $\Pi$ depends on $c_1$, which we fix in the next Lemma.

\begin{lemma}\label{th:PiExists}
There are constants $c_1,c_2 > 0$ such that either a single line or circle contains $c_2 n$ points of $\pts$, or $|\Pi| = \Omega(n^2)$.
\end{lemma}
\begin{proof}
	Let $M = c_1 n^{2/7}\log^{2/7}n$, and let $U = c_2 n$.
	Assume that no circle or line contains $U$ points of $\pts$, since otherwise we're done.
	
	Let $T$ be the number of triples $(p,q,C)$ of two points $p,q$ and a line or circle $C$ such that $p,q \in C \cap \pts$ and $M \leq |C \cap \pts| \leq U$.
	Then,
\[ T = \sum_{k = M}^U k^2 s_{=k} = \sum_{k = M}^U k^2(s_k - s_{k+1}) \leq \sum_{k = M+1}^U 2k s_k + M^2 s_M.\]

We use Lemma \ref{thm:pointCircleBound} to bound $s_k$.
Since the term $n^3 k^{-11/2} \log n$ is dominant for $k=M$, this gives
\[ T \leq O \left( \sum_{k \geq M} n^{3}k^{-9/2}\log n +  \sum_{k \geq M} n^2 k^{-2} + \sum_{k \leq U} n + n^3M^{-7/2}\log n \right). \]

If we take $c_1$ to be sufficiently large depending on the implied constant in Lemma \ref{thm:pointCircleBound}, then the first and fourth terms will each be bounded by $n^2/10$, and the second term will be bounded by $O(n^{9/7}\log^{-5/7}n)$.
If we take $c_2$ to be sufficiently small, again depending on the implied constant in Lemma \ref{thm:pointCircleBound}, then the third term will be bounded by $n^2/10$.
Adding these contributions together shows that $T < n^2/2$.

Since $T$ counts each pair of points that is contained in a circle that contains at least $M$ points of $\pts$ (possibly more than once), $|\Pi| \geq n(n-1) - T = \Omega(n^2)$.
\end{proof}

\noindent{\bf Bounding the energy.}
Next, we bound a refinement of the bisector energy depending on $\Pi$.
Our argument is essentially identical to proof of the the analogous bound in \cite{lund2015bisector}, and in fact we refer to \cite{lund2015bisector} for many of the key facts used.

Let $\pts^{2*} \subset \pts^{2}$ be the set of pairs of distinct points of $\pts$.
For each pair $(a,b) \in P^{2*}$, let $C(a,b)$ be the maximum number of points on any circle or line that contains $a,b$.
Let 
\begin{align*}
\Pi_K &= \{(a,b) \in P^2 : a \neq b, \, C(a,b) \leq K\}, \\
\mathcal{Q}_K &= \{(a,b,c,d) \in P^4 : (a,b),(c,d) \in \Pi_K, \, \bi{a,b} = \bi{c,d}\}.
\end{align*}

Our goal is to prove an upper bound on $\mathcal{Q}_K$.
Note that, if $B(a,b) = B(c,d)$, then
\begin{align}
(a+b-c-d) \cdot (a-b) &= 0, \label{eq:firstBisectorConstraint}\\
(a+b-c-d) \cdot (c-d) &=0. \label{eq:secondBisectorConstraint}
\end{align}
Indeed, $(a+b-c-d)$ is parallel to the line through the midpoints of $(a,b)$ and $(c,d)$, hence (\ref{eq:firstBisectorConstraint}) requires that this line be perpendicular to the line through $a$ and $b$, and (\ref{eq:secondBisectorConstraint}) requires that this line be perpendicular to the line through $c$ and $d$.
Hence, any quadruple that contributes to $\mathcal{Q}$ satisfies (\ref{eq:firstBisectorConstraint}) and (\ref{eq:secondBisectorConstraint}).
There are some quadruples $(a,b,c,d)$ satisfy (\ref{eq:firstBisectorConstraint}) and (\ref{eq:secondBisectorConstraint}), but do not contribute to $\mathcal{Q}$.
However, these cases only occur if $a=b$ or $c=d$; see \cite[Lemma 3.1]{lund2015bisector}.

For any pair $(a,c)$ of distinct points, let $S_{ac}$ be the set of pairs $(b,d)$ satisfying (\ref{eq:firstBisectorConstraint}) and (\ref{eq:secondBisectorConstraint}).
Let $G$ be the incidence graph between varieties $\{S_{ac}:(a,c) \in P^{2*}\}$ and pairs of points $(b,d) \in P^{2*}$.
Let $G_K$ be the subgraph of $G$ so that an edge $(S_{ac}, (b,d)) \in G$ is in $G_K$ if and only if $(a,b),(c,d) \in \Pi_K$.
Note that the edges of $G_K$ correspond exactly to the elements of $\en_K$.

In \cite{lund2015bisector}, the authors used a geometric incidence argument to bound the number of edges in $G$.
We will use the same argument to bound the number of edges in $G_K$; the only difficulty is in identifying the property of $G_K$ that enables us to run the argument of \cite{lund2015bisector}.

We need a couple of geometric facts from \cite{lund2015bisector}.
First, if $(S_{ac}, (b,d)) \in G$, then $|ac| = |bd|$.
This is easy to derive from equations (\ref{eq:firstBisectorConstraint}) and (\ref{eq:secondBisectorConstraint}): expand the products and subtract (\ref{eq:secondBisectorConstraint}) from (\ref{eq:firstBisectorConstraint}).
Also see Figure \ref{fig:bisectorQuadruple}.

\begin{figure}
	\begin{center}
	\includegraphics{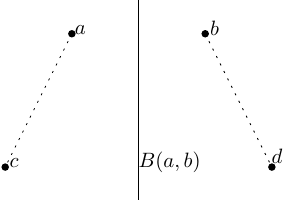}
	\caption{Since $B(a,b) = B(c,d)$, the pair $(b,d)$ is the reflection of $(a,c)$ over the line $B(a,b)$, and so $|ac| = |bd|$. }
	\label{fig:bisectorQuadruple}
	\end{center}
\end{figure}

Next, we have the following lemma, which gives a constraint on pairs $(b,d)$ such that $B(a,b) = B(c,d)$ and $B(a',b) = B(c',d)$ for fixed points $a,c,a',c'$.

\begin{lemma}[Lemma 3.2 \cite{lund2015bisector}]\label{lem:intersection}
	Let $a,c,a',c' \in \mathbb{R}^2$ such that $(a,c)\neq (a',c')$, and $|ac| = |a'c'| = \delta \neq0$.
	There exist curves $C_1,C_2\subset \RR^2$ depending on $a,c,a',c'$, which are either two concentric circles or two parallel lines, such that, if $B(a,b) = B(c,d)$ and $B(a',b) = B(c',d)$, then $a,a',b \in C_1$ and $c,c',d \in C_1$.
\end{lemma}

Now we can establish the property we need from $G_K$.

\begin{lemma}\label{th:Kcut}
	If ${S}_{ac}$ and ${S}_{a'c'}$ more than $K+2$ common neighbors in $G$, then they have no common neighbors in $G_K$. 
\end{lemma}

\begin{proof}
	We first show that, if $c \neq c'$, then $S_{ac}$ and $S_{ac'}$ have no common neighbors in $G_K$.
	Let $(b,d) \in S_{ac}$.
	If $b \neq a$ and $d \neq c$, then $a,b,c,d$ are the vertices of an isoceles trapzoid.
	Hence, $c$ is determined uniquely by $a,b,d$, and $(b,d) \notin S_{ac'}$
	If $b = a$, then $(a,b) \notin \Pi_K$, so $(S_{ac},(b,d)) \notin G_K$, and similarly for the case $d=c$.
	By the same reasoning, $S_{ac}$ and $S_{a'c}$ have no common neighbors in $G_K$ for $a \neq a'$.
	
	Assume that $a \neq a'$ and $c \neq c'$, and let $(b,d) \in S_{ac} \cap S_{a'c'}$.
	By Lemma \ref{lem:intersection}, if $d \neq c$ and $d \neq c'$, then $b$ is contained in a line or circle $C_1$ depending on $a,a',c,c'$.
	If $d = c$, then the location of $b$ is uniquely determined by $a',c',c$, and similarly if $d = c'$.
	Hence, if $S_{ac} \cap S_{a'c'} > K+2$, then $C_1$ contains more than $K$ points of $\pts$.
	In this case, none of the pairs $(a,b)$ with $a,b \in C_1$ are in $\Pi_K$, so the corresponding edges are missing in $G_K$.
\end{proof}

The following is the general incidence bound we use to control the size of $G_K$.
It is a slight generalization of a bound in \cite{lund2015bisector}, which is in turn a generalization of a bound in \cite{fox2014semi}.
See \cite{fox2014semi} for definitions of the algebraic terms used.

\begin{theorem}\label{th:incidencebound}
	Let $\vrts$ be a set of $n$ constant-degree varieties, and let $\pts$ be a set of $m$ points, both in $\RR^d$, where $d \geq 2$.
	Let $s \geq 2$ be a constant, and $t \geq 2$ be a function of $m,n$.
	Let $G$ be the incidence graph of $\pts \times \vrts$.
	Let $G' \subseteq G$ such that, if a set $L$ of $s$ left vertices has a common neighborhood of size $t$ or more in $G$, then no pair of vertices in $L$ has a common neighbor in $G'$.
	Moreover, suppose that $\pts\subset V$, where $V$ is an irreducible constant-degree variety of dimension $e$. 
	Then, for any $\eps > 0$,
	\[|G'| = O\left(m^{\frac{s(e-1)}{es-1}+\eps}n^{\frac{e(s-1)}{es-1}}t^{\frac{e-1}{es-1}} +tm+n\right),\]
	with the implied constant depending on $\eps$.
\end{theorem}

The proof of Theorem \ref{th:incidencebound} is nearly identical to the proof of \cite[Theorem 2.5]{lund2015bisector}.
Instead of reproducing the full proof here, we briefly state how to modify the proof of \cite[Theorem 2.5]{lund2015bisector} to obtain the more general Theorem \ref{th:incidencebound}; the remainder of this paragraph is refers to the notation from the proof of \cite[Theorem 2.5]{lund2015bisector}.
We partition $G_K$ into $I_1, I_2,$ and $I_3$ as in the proof of \cite[Theorem 2.5]{lund2015bisector}.
Bounding $|I_2|$ and $|I_3|$ requires no change at all; these bounds only depend on the fact that $G_K$ is $K_{s,t}$-free.
Any incidence in $I_1$ occurs in some irreducible component $W$ of $V \cap Z(f)$, where $Z(f)$ is the zero set of our partitioning polynomial, such that $W$ is fully contained in some variety $S \in \surfs$.
In bounding $I_1$, we need to make use of the observation that if  there are at least $t$ varieties of $\surfs$ that fully contain $W$, then, by Lemma \ref{th:Kcut}, no pair of vertices corresponding to the points contained in $W$ has a common neighbor in $G_K$ among the varieties that contain $W$.

We now have all of the tools in place to bound $|\en_K|$.

\begin{lemma}\label{thm:QK}
	For any $2 \leq K \leq n$ and $\eps > 0$,
	$$|\en_K| = O\left(K^{{2}/{5}}n^{{12}/{5}+\eps} + K n^2 \right),$$
	with the implied constant depending on $\eps$.
\end{lemma}
\begin{proof}
	Let $\delta_1, \ldots, \delta_D$ denote the distinct non-zero distances determined by pairs of distinct points in $\pts$.
	Let
	\begin{align*}
	\pts^2_i &= \{(b,d) \in \pts^{2*} : |bd| = \delta_i\},\\
	\surfs_i &= \{S_{ac} \in \surfs : |ac| = \delta_i\}, \\
	G'_i &= \{(S_{ac},(b,d)) \in G_K : |ac| = |bd| = \delta_i\}.
	\end{align*}
	Let $$m_i = |\pts^2_i| = |\surfs_i|.$$
	
	As observed above, each quadruple $(a,b,c,d) \in Q$ satisfies $|ac| = |bd|$.
	Hence, it suffices to study each $G'_i$ separately.
	That is, we have
	$$|Q_K| = |G_K| = \sum_{i=1}^D |G'_i|.$$
	
	For each $i \in [D]$, we apply Theorem \ref{th:incidencebound} with: $m=n=m_i$, $d=4$, $\surfs = \surfs_i$, $\pts = \pts^2_i$, $s = 2$, $t=K+3$, and $V = \{(a,b):|ab| = \delta_i \}$, and $e=3$.
	This gives, for any $\eps > 0$,
	\begin{equation}\label{eq:Inc-i}|G'_i| = O(K^{2/5}m_i^{7/5+\eps} + Km_i),\end{equation}
	with the implied constant depending on $\eps$.
	
	Let $J$ be the set of indexes $1\le j\le D$ for which the bound in \eqref{eq:Inc-i} is dominated by the term $K^{{2}/{5}} m_j^{{7}/{5}+\eps}$. By recalling that $\sum_{j=1}^D m_j = n(n-1)$, we get
	\[\sum_{j\not\in J}|G_j'| = O\left(Kn^2\right).\]
	Next we consider $\sum_{j\in J}|G'_j| = \sum_{j\in J} O(K^{2/5} m_j^{7/5+\eps} )$.
	By H\"older's inequality,
	\[\sum_{j \in J} m_j^{7/5} = \sum_{j \in J} m_j^{3/5}(m_j^2)^{2/5} \leq \left( \sum_{j \in J} m_j \right)^{3/5} \left(\sum_{j \in J}m_j^2 \right)^{2/5}.  \]
	
	Guth and Katz \cite[Proposition 2.2]{guth2015erdos} proved a tight bound on $\sum m_j^2$:
	\[\sum m_j^2 = O(n^3\log n).\]
	
	Combining these estimates, for any $\eps > 0$,
	\[\sum_{j \in J} |G_j'| = O_\eps(K^{2/5}n^{12/5 + \eps}).\]
\end{proof}

\noindent \textbf{Finishing the proof.}
Lemma \ref{th:PiExists} is that $|\Pi_M| = \Omega(n^2)$, where $M = c_1 n^{2/7} \log^{2/7}n$.
The following standard application of the Cauchy-Schwarz inequality shows that the upper bound $\mathcal{Q}_M$ given in Lemma \ref{thm:QK} implies a lower bound on $|\bis|$.

\begin{lemma}\label{thm:applCS}
	$$|\bis| = \Omega\left(n^4|\en_M|^{-1}\right).$$
\end{lemma}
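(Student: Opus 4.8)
The plan is to run the standard Cauchy--Schwarz conversion from an energy bound to a lower bound on the number of distinct objects, but executed on the refined set $\Pi$ rather than on all of $\pts \times \pts$. The key bookkeeping device is a representation (multiplicity) function on $\bism$. First I would introduce, for each bisector $\ell \in \bism$, the quantity
$$m(\ell) = \#\{(a,b) \in \Pi : \bi{a,b} = \ell\},$$
counting how many pairs of $\Pi$ have $\ell$ as their common perpendicular bisector. Since every pair in $\Pi$ contributes exactly one bisector, and that bisector lies in $\bism$ by definition, summing over $\ell$ recovers the total number of pairs:
$$\sum_{\ell \in \bism} m(\ell) = |\Pi|.$$

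The next step is to identify the second moment of $m$ with the refined energy $\en^*$. A quadruple $(a,b,c,d)$ belongs to $\en^*$ precisely when $(a,b),(c,d) \in \Pi$ and $\bi{a,b} = \bi{c,d}$; grouping such quadruples according to their common bisector $\ell$, there are $m(\ell)$ independent choices for the pair $(a,b)$ and $m(\ell)$ independent choices for the pair $(c,d)$, so that
$$\sum_{\ell \in \bism} m(\ell)^2 = |\en^*|.$$
With these two identities in hand, I would apply Cauchy--Schwarz to the vector $(m(\ell))_{\ell \in \bism}$ against the all-ones vector, yielding
$$|\Pi|^2 = \left(\sum_{\ell \in \bism} m(\ell)\right)^2 \leq |\bism| \sum_{\ell \in \bism} m(\ell)^2 = |\bism| \cdot |\en^*|.$$
Rearranging gives $|\bism| \geq |\Pi|^2 / |\en^*|$, and since we are in the case (already reduced to above) where no line or circle contains $c_2 n$ points, Lemma \ref{th:PiExists} supplies $|\Pi| = \Omega(n^2)$, hence $|\Pi|^2 = \Omega(n^4)$. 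Finally, because $\bism \subseteq \bis$, we conclude $|\bis| \geq |\bism| = \Omega\!\left(n^4 |\en^*|^{-1}\right)$, as claimed.

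There is no genuine analytic obstacle in this argument; it is a routine Cauchy--Schwarz estimate. The only points requiring care are purely combinatorial: correctly accounting for ordered versus unordered pairs (the symmetry $\bi{a,b} = \bi{b,a}$ and the symmetry of the membership condition defining $\Pi$ affect only constant factors, which are absorbed into the $\Omega$-notation), and verifying the two counting identities above. The substantive input, namely the lower bound $|\Pi| = \Omega(n^2)$, is not proved here but is exactly what Lemma \ref{th:PiExists} provides, so the remaining work in the overall program is the upper bound on $|\en^*|$ rather than anything in this lemma.
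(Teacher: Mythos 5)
Your proposal is correct and is essentially identical to the paper's own argument: both define the multiplicity function on $\bism$, identify its first moment with $|\Pi|$ and its second moment with $|\en^*|$, apply Cauchy--Schwarz, and invoke $|\Pi| = \Omega(n^2)$ from Lemma \ref{th:PiExists} together with $\bism \subseteq \bis$. No further comment is needed.
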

\begin{proof}
	Let
	\[\bis_M = \{\bi{a,b} : (a,b) \in \Pi_M \}.\]
	For a line $\ell$, denote by $w(\ell)$ the number of pairs $(a,b) \in \Pi$ such that $\bi{a,b} = \ell$.
	By the Cauchy-Schwarz inequality,
	\[
	|\en_M| = \sum_{\ell \in \bis_M} w(\ell)^2 \geq \left(\sum_{\ell \in \bis_M} w(\ell) \right)^2|\bis_M|^{-1} = |\Pi|^2|\bis_M|^{-1}.
	\]
	Hence,
	\[
	|\bis| \geq |\bis_M| \geq |\Pi|^2|\en_M|^{-1} = \Omega\left(n^4|\en_M|^{-1}\right).
	\]
\end{proof}

Lemma \ref{thm:QK} gives
$$|\en_M| = O(n^{88/35 + \eps}). $$
Combining this with Lemma \ref{thm:applCS}, we have
$$|\bis| = \Omega(n^{52/35 - \eps}),$$
which is Theorem \ref{thm:distinctBisectors}.

\section{Acknowledgements}
I thank Brandon Hanson, Peter Hajnal, Oliver Roche-Newton, Adam Sheffer, and Frank de Zeeuw for many stimulating conversations on perpendicular bisectors and related questions.
I thank Luca Ghidelli for pointing out an error in Lemma \ref{th:PiExists} in an earlier version.
I thank anonymous referees for numerous helpful comments on the writing and presentation of this paper.

\bibliographystyle{plain}
\bibliography{bisectors}

\end{document}